\newtheorem{theorem}{Theorem}
\newtheorem{lemma}[theorem]{Lemma}
\newtheorem{proposition}[theorem]{Proposition}
\begin{document}
\title[Uniqueness results ]{Fischer decompositions for
entire functions of sufficiently low order}
\author{J.M. Aldaz and H. Render}
\address{H. Render: School of Mathematical Sciences, University College
	Dublin, Dublin 4, Ireland.}
\email{hermann.render@ucd.ie}
\address{J.M. Aldaz: Instituto de Ciencias Matem\'aticas (CSIC-UAM-UC3M-UCM)
	and Departamento de Matem\'aticas, Universidad Aut\'onoma de Madrid,
	Cantoblanco 28049, Madrid, Spain.}
\email{jesus.munarriz@uam.es}
\email{jesus.munarriz@icmat.es}
\thanks{2020 Mathematics Subject Classification: \emph{Primary: 32A05}, 
	\emph{Secondary: 35A01}}
\thanks{Key words and phrases: \emph{Fischer decomposition, entire function of finite order}}
\thanks{The first named author was partially supported by Grant PID2019-106870GB-I00 of the
	MICINN of Spain,  by ICMAT Severo Ochoa project 
	CEX2019-000904-S (MICINN), and by  V PRICIT (Comunidad de Madrid - Spain).}

\maketitle

\begin{abstract} The existence of decompositions of the form $
	f=P\cdot q+r$ with $P_k^{\ast}\left(  D\right)  r=0
$, where $f$ is entire, $P$ a polynomial and  $P^{\ast}_k$ the principal part of $P$ with its coefficients conjugated, was achieved in \cite{AlRe23} under certain restrictions on the order of $f$. Here we prove uniqueness, thereby obtaining Fischer decompositions, under conditions that sometimes match those required for existence, and sometimes are more restrictive, depending on the parameters involved. 
\end{abstract}

\section{Introduction}

Let $\mathcal{P}\left(  \mathbb{C}^{d}\right)  = \mathbb{C}[z_{1}, \dots,
z_{d}]$ be the set of all polynomials in $z=\left(  z_{1},\dots,z_{d}\right)
\in\mathbb{C}^{d}$, and let $P \in\mathcal{P} \left(  \mathbb{C}^{d}\right)
$. By $P^{\ast}\left(  z\right)  $ we denote the polynomial obtained from
$P\left(  z\right)  $ by conjugating its coefficients, and by $P\left(
D\right)  $, the linear differential operator obtained by replacing each
appearance of the variable $z_{i}$ with $\partial/\partial{z_{i}}$, for
$i=1,\dots,d$. Fischer's decomposition theorem states that for every
\emph{homogeneous} polynomial $P$ and every polynomial $f$ there exist
\emph{unique} \emph{polynomials} $q$ and $r$ such that%
\[
f=P\cdot q+r\text{ and }P^{\ast}\left(  D\right)  r=0.
\]
According to D. J. Newman and H. S. Shapiro, cf. \cite[pg. 971]{NeSh66},
Fischer's result ``underlies various formal schemes for exhibiting a basic set
of polynomial solutions of a partial differential equation". H. S. Shapiro and
several of his collaborators studied Fischer decompositions in a wider setting
(cf. for instance \cite{Shap89}, \cite{KhSh92}, \cite{EbSh95}, \cite{EbSh96})
going beyond the case of polynomials to more general function spaces, in
particular, to the space $E\left(  \mathbb{C}^{d}\right)  $ of all entire
functions $f:\mathbb{C}^{d}\rightarrow\mathbb{C}$. We recall a notion
introduced in \cite[p. 522]{Shap89}: given a vector space $E$ of infinitely
differentiable functions $f:G\rightarrow\mathbb{C }$ (defined on an open
subset $G$ of $\mathbb{C}^{d}$) that is a module over $\mathcal{P}\left(
\mathbb{C}^{d}\right)  $, a polynomial $P$ and a differential operator
$Q\left(  D\right)  $ are said to form a \emph{Fischer pair for the space }%
$E$, if for each $f\in E$ there exist \emph{unique} elements $q\in E$ and
$r\in E$ such that
\begin{equation}
	f=P\cdot q+r\text{ and }Q\left(  D\right)  r=0. \label{eqDecomp}%
\end{equation}
 The question of uniqueness can be rephrased as follows: suppose that $f=P\cdot
q_{j}+r_{j}$ and $Q\left(  D\right)  r_{j}=0$ for $j=1,2.$ Then
with $q:=q_{2}-q_{1}$ and $r := r_{2}-r_{1}$ we obtain the equation $0=Pq+r.$
Applying the operator $Q\left(  D\right)  $ we see that
\begin{equation}
	Q\left(  D\right)  \left(  Pq\right)  =0. \label{uniqueness}
\end{equation}
Thus it is sufficient for uniqueness to show from (\ref{uniqueness}) that
$q=0.$

For simplicity in the exposition we indicate next a special case of Theorem \ref{main}
(corresponding to the parameter $\tau= 0$).  Only after we mention the definition of the apolar inner product,  will the full Theorem \ref{main} be
stated. Recall that a
polynomial $P\left(  z\right)  $ is \emph{homogeneous} of degree $m$ if
$P\left(  tz \right)  =t^{m}P\left(  z\right)  $ for all $t>0$ and for all
$z$ (in particular, the constant zero polynomial is homogeneous of every
degree).

\begin{theorem}
	\label{mainSp} Let $0 \le\beta\leq k-1 $, let $P$ be a non-homogeneous polynomial of degree $k
	\ge 2$ with homogenous expansion $P=P_{0}+\cdots+P_{\beta}+P_{k} $, and let
	$\varphi:\mathbb{C}^{d}\rightarrow\mathbb{C}$ be an entire function which
	satisfies $P_{k}^{\ast}\left(  D\right)  \left(  P\varphi\right)  =0$ and has
	order $\rho< 2 k^{-1}\left(  k-\beta\right)  $. Then $\varphi=0.$
\end{theorem}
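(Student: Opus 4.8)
The plan is to pass to the homogeneous Taylor components of $\varphi$, convert the hypothesis into a backward recursion among them, and then run that recursion against the decay forced by the order. Write $\varphi=\sum_{n\ge 0}\varphi_{n}$ with $\varphi_{n}$ homogeneous of degree $n$. Since $P_{k}^{\ast}(D)$ is homogeneous of degree $k$, expanding $P\varphi$ by degree and extracting the homogeneous component of degree $L$ from $P_{k}^{\ast}(D)(P\varphi)=0$ gives, for every $L\ge 0$,
\[
P_{k}^{\ast}(D)\big(P_{k}\varphi_{L}\big)=-\sum_{j=0}^{\beta}P_{k}^{\ast}(D)\big(P_{j}\varphi_{L+k-j}\big) .
\]
As $\beta\le k-1$, every index $L+k-j$ on the right exceeds $L$, so this is a genuine recursion downwards in degree. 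Let $T_{L}$ be the operator $Q\mapsto P_{k}^{\ast}(D)(P_{k}Q)$ restricted to the space of homogeneous polynomials of degree $L$; it preserves that space, and by Fischer's theorem for the homogeneous polynomial $P_{k}$ it is a bijection of it, so the recursion reads $\varphi_{L}=-T_{L}^{-1}\sum_{j=0}^{\beta}P_{k}^{\ast}(D)(P_{j}\varphi_{L+k-j})$.

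The decisive point is a bound on $T_{L}^{-1}\circ P_{k}^{\ast}(D)$ that is uniform in $L$. In the apolar inner product $P_{k}^{\ast}(D)$ is the adjoint of multiplication by $P_{k}$, whence $\langle T_{L}Q,Q\rangle=\|P_{k}Q\|^{2}$; in particular $T_{L}>0$, and there is a constant $c_{0}>0$, independent of $L$, with $\|P_{k}Q\|\ge c_{0}\|Q\|$ for all homogeneous $Q$ (uniform boundedness below of multiplication by $P_{k}$ in the apolar norm, which I would cite or obtain from the Fock-space description). For $g$ homogeneous of degree $L+k$, put $\psi:=T_{L}^{-1}(P_{k}^{\ast}(D)g)$; then $T_{L}\psi=P_{k}^{\ast}(D)g$ gives
\[
\|P_{k}\psi\|^{2}=\big\langle P_{k}^{\ast}(D)(P_{k}\psi),\psi\big\rangle=\big\langle P_{k}^{\ast}(D)g,\psi\big\rangle=\langle g,P_{k}\psi\rangle\le\|g\|\,\|P_{k}\psi\| ,
\]
so $\|P_{k}\psi\|\le\|g\|$ and hence $\|\psi\|\le c_{0}^{-1}\|g\|$. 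Thus the factor of order $(\deg)^{k/2}$ that $P_{k}^{\ast}(D)$ would naively contribute is exactly cancelled by $T_{L}^{-1}$; what survives is only the elementary multiplier bound $\|P_{j}h\|\le C_{P}\,(\deg P_{j}h)^{j/2}\|h\|$ for homogeneous $h$ and $j\le\beta$. I expect this cancellation to be the crux: treating $P_{k}^{\ast}(D)$ and $T_{L}^{-1}$ separately loses a factor $(\deg)^{k/2}$ at every step and yields only the weaker bound $\rho<2(k-\beta)/(k+\beta)$. A secondary technical point is the uniform lower bound $\|P_{k}Q\|\ge c_{0}\|Q\|$, since without uniformity in the degree the estimate degrades.

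Combining the above, $\|\varphi_{L}\|\le C\sum_{j=0}^{\beta}(L+k)^{j/2}\|\varphi_{L+k-j}\|$. Iterating $\ell$ times and recording the indices $j_{1},\dots,j_{\ell}$ used, a surviving term involves $\varphi_{n}$ with $n=M+\ell k-\sum_{i}j_{i}\in[M+\ell(k-\beta),\,M+\ell k]$ and picks up a degree factor bounded by $(n+k)^{\frac12\sum_{i}j_{i}}=(n+k)^{\frac12(\ell k-(n-M))}$ (degrees increase along the chain, so every intermediate degree is $\le n$); since there are at most $\ell\beta+1$ admissible $n$ and at most $(\beta+1)^{\ell}$ index-sequences for each, we obtain, for all $M$ and all $\ell\ge1$,
\[
\|\varphi_{M}\|\le A^{\ell}\sum_{n=M+\ell(k-\beta)}^{M+\ell k}(n+k)^{\frac12(\ell k-(n-M))}\,\|\varphi_{n}\| ,
\]
with $A=A(P,\beta,c_{0})$.

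Finally I would feed in the order. Fix $\rho'$ with $\rho<\rho'<2(k-\beta)/k$ (note $\rho'<2$). By the standard relation between the order of an entire function and the decay of its homogeneous components, $\|\varphi_{n}\|\le n^{\,n(1/2-1/\rho')}$ for all large $n$. Substituting, the exponent of $n$ in the $n$-th summand becomes $\tfrac12(\ell k-(n-M))+n(\tfrac12-\tfrac1{\rho'})=\tfrac{\ell k}{2}+\tfrac M2-\tfrac n{\rho'}$, which decreases in $n$, so the dominant summand is the one with $n=M+\ell(k-\beta)$; there the coefficient of $\ell$ in the exponent equals $\tfrac k2-\tfrac{k-\beta}{\rho'}$, strictly negative by the choice of $\rho'$. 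Since $n\asymp\ell$ for that summand, it is at most $\ell^{-c\ell}$ up to factors $e^{O(\ell)}$ (and $A^{\ell}$ is of the latter kind), so the whole right-hand side tends to $0$ as $\ell\to\infty$. Therefore $\varphi_{M}=0$ for every $M$, i.e. $\varphi=0$.
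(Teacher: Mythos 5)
Your proposal is correct and follows essentially the same route as the paper: homogeneous expansion, a backward recursion for $\|\varphi_m\|_a$ obtained from the apolar adjointness of $P_k^{\ast}(D)$ and multiplication by $P_k$, combined with the Bombieri lower bound $\|P_kQ\|_a\ge c_0\|Q\|_a$ (the $\tau=0$ case of (\ref{cond3})) and a Beauzamy-type multiplier bound, and finally iteration of the recursion against the coefficient decay forced by the order. The differences are only cosmetic: you package the key cancellation through the inverse Fischer operator $T_L^{-1}$ rather than pairing the degree-$(m+k)$ component directly with $P_k\varphi_m$ and using Cauchy--Schwarz, and you perform the iteration and asymptotic bookkeeping by hand instead of invoking the paper's separate Lemma \ref{lemmaSeq} on nonnegative sequences.
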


In terms of the Fischer operator $F_{QP}, $ defined via $F_{QP} (\varphi) :=
Q(D) (P\varphi), $ the conclusion of Theorem \ref{mainSp} states that
$F_{P_{k}^{*}P}$ is injective on the entire functions of sufficiently low order.

\vskip .2 cm

Note that we only deal with the case of a non-homogeneous polynomial $P$. The bijectivity of the Fischer operator on the space of {\em all entire functions} when $P$ is homogeneous, was proved by H. S. Shapiro in \cite[Theorem 1]{Shap89}.

\vskip .2 cm

We speak of \emph{weak Fischer pairs} when the decomposition $f=P\cdot q+r$ is
not required to be unique. In reference \cite[Theorem 2]{AlRe23}, the authors
proved the existence of weak Fischer decompositions, under the bound
$\rho(k-\tau) < 2(k-\beta) $ on the order $\rho$ of $f$, where the parameter $\tau$ quantifies the strength of the \emph{Khavinson-Shapiro bounds}, cf. the
inequalities (\ref{cond3}) below. Our main theorem partially complements this existence
result, yielding
the injectivity of the Fischer operator under 
sometimes more strict assumptions on $\rho$, depending on the parameters.

We are indebted to two anonymous referees for their careful reading of this paper and several usefull suggestions.

\section{A lemma on sequences of nonnegative numbers}

\begin{lemma}
	\label{lemmaSeq} Let $\left\{  a_{m}\right\}  _{m\in\mathbb{N}}$ be a sequence
	of non-negative numbers. Suppose there is a finite non-empty subset
	$E\subset\mathbb{N}\setminus\{0\}$ satisfying the following two conditions:
	
	(i) There exist constants $A \ge 1$,   $D \ge 0$ and $\alpha\in\mathbb{R}$ such that for
	all $m\in\mathbb{N}$,
	\[
	a_{m}\leq A\left(  m+D\right)  ^{\alpha}\max_{j\in E}a_{m+j}.
	\]
	(ii) There exist constants $A_{0},b_{0}>0$,  $D_{0},\alpha_{0}\geq0$ and
	$\sigma\neq0$ such that for all $m\in\mathbb{N}\setminus\{0\}$,
	\[
	a_{m}\leq A_{0}\ \frac{\left(  m+D_{0}\right)  ^{\alpha_{0}}}{b_{0}^{m}%
	}\ m^{-\frac{m}{\sigma}}\text{ }.
	\]
	Let $\beta_{\ast}:=\min E$ and $\beta^{\ast}=\max E.$ If
	\[
	0\leq\alpha<\frac{\beta_{\ast}}{\sigma},\text{ \quad or }\quad\alpha<0\text{
		and }\alpha<\frac{\beta^{\ast}}{\sigma},
	\]
	then for all $m\in\mathbb{N}$ we have $a_{m}=0$. When $\alpha
	=\frac{\beta_{\ast}}{\sigma} > 0$, the same conclusion holds under the
	additional hypothesis that $A < b_{0}^{\beta_{\ast}}.$
\end{lemma}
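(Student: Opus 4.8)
The plan is to iterate hypothesis~(i) and then invoke~(ii) at the far end of the chain. Fix $m\in\mathbb{N}$; we may assume $m+D>0$ (when $m=D=0$ the conclusion is immediate if $\alpha\ge 0$). Applying~(i) at $m+s_{l}$ and picking $j_{l+1}\in E$ that attains the maximum there, with $s_{0}:=0$, $s_{l+1}:=s_{l}+j_{l+1}$, we get $0=s_{0}<s_{1}<\cdots$ with $l\beta_{\ast}\le s_{l}\le l\beta^{\ast}$ and, for all $n$,
\[
a_{m}\ \le\ A^{n}\Bigl(\,\prod_{l=0}^{n-1}(m+s_{l}+D)^{\alpha}\Bigr)\,a_{m+s_{n}} .
\]
Since $m+s_{n}\ge n\beta_{\ast}\ge n\ge 1$, hypothesis~(ii) bounds $a_{m+s_{n}}$. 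Inserting it, taking logarithms, and absorbing $A_{0}(m+s_{n}+D_{0})^{\alpha_{0}}$ into an error $O(\log(n+2))$, it will suffice to show that the resulting upper bound for $\log a_{m}$ tends to $-\infty$ with $n$; this forces $a_{m}=0$.

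The heart of the matter is to compare $\alpha\sum_{l=0}^{n-1}\log(m+s_{l}+D)$ with the decisive term $-\sigma^{-1}(m+s_{n})\log(m+s_{n})$ from~(ii), \emph{uniformly over the path}. If $\alpha\ge 0$, the quantities $m+s_{l}+D$ increase with successive gaps $\ge\beta_{\ast}$ and are all $\le m+s_{n}+D-\beta_{\ast}$; pushing them to the top and comparing a sum to an integral yields, with $w:=\beta_{\ast}^{-1}(m+s_{n}+D)>n$,
\[
\sum_{l=0}^{n-1}\log(m+s_{l}+D)\ \le\ n\log\beta_{\ast}+\sum_{r=1}^{n}\log(w-r)\ \le\ n\log\beta_{\ast}+w\log w-(w-n)\log(w-n)-n .
\]
If $\alpha<0$, one uses instead $s_{l}\ge l\beta_{\ast}$ and Stirling's formula to get $\alpha\sum_{l=0}^{n-1}\log(m+s_{l}+D)\le\alpha\sum_{l=0}^{n-1}\log(m+l\beta_{\ast}+D)=\alpha\,n\log n+O(n)$.

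For $0\le\alpha<\beta_{\ast}/\sigma$ (which forces $\sigma>0$) and for $\alpha<0$ with $\alpha<\beta^{\ast}/\sigma$, it is enough to read off the coefficient of $n\log n$: after substituting the estimates above together with Stirling, every other contribution is $O(n)+O(\log(n+2))$, whereas the coefficient of $n\log n$ is $\le\alpha-\beta_{\ast}/\sigma$ when $\sigma>0$ and $\le\alpha-\beta^{\ast}/\sigma$ when $\sigma<0$, uniformly in the path (the extremal behaviour occurring for the constant paths $s_{l}=l\beta_{\ast}$, resp.\ $s_{l}=l\beta^{\ast}$). Under the stated hypotheses this coefficient is negative, so $\log a_{m}\to-\infty$ and $a_{m}=0$.

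The delicate case is $\alpha=\beta_{\ast}/\sigma>0$, where the $n\log n$ terms cancel and cruder estimates no longer suffice. Writing $E_{n}:=m+s_{n}+D-n\beta_{\ast}$ (so $E_{n}\ge m+D>0$), a direct computation in which all $\log\beta_{\ast}$ contributions cancel gives
\[
\log a_{m}\ \le\ n\Bigl(\log\tfrac{A}{b_{0}^{\beta_{\ast}}}-\tfrac{\beta_{\ast}}{\sigma}\Bigr)\ +\ h(E_{n})\ +\ O(\log(n+2)),\qquad h(E):=-E\log b_{0}-\tfrac{E}{\sigma}\log E .
\]
Since $\sigma>0$, $h(E)\to-\infty$ as $E\to\infty$ while $h$ is continuous with $h(0^{+})=0$, so $\sup_{E\ge0}h(E)$ is a finite constant, independent of $n$ and of the path; and the hypothesis $A<b_{0}^{\beta_{\ast}}$ makes the coefficient of $n$ strictly negative (recall $\beta_{\ast}/\sigma>0$), whence $\log a_{m}\to-\infty$ again. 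I expect this uniformity in the boundary case to be the main obstacle: replacing $\sum\log(m+s_{l}+D)$ by the coarser integral $\beta_{\ast}^{-1}\!\int_{m+D}^{m+s_{n}+D}\log t\,dt$, or bounding $m+s_{n}$ above by $m+n\beta^{\ast}$ in the term $-(m+s_{n})\log b_{0}$, loses an amount of order $n$ (indeed up to $n\log n$) on paths where $s_{n}$ substantially exceeds $n\beta_{\ast}$, such as $s_{l}=l\beta^{\ast}$, and would force a condition strictly stronger than $A<b_{0}^{\beta_{\ast}}$ once $\beta^{\ast}>\beta_{\ast}$ and $b_{0}<1$. Keeping the surplus-decay term $-\sigma^{-1}\beta_{\ast}(w-n)\log(w-n)$ supplied by~(ii) whenever $s_{n}>n\beta_{\ast}$ is precisely what lets the residual path dependence collapse into the bounded quantity $h(E_{n})$.
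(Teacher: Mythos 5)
Your argument is correct and follows essentially the same route as the paper: iterate hypothesis (i) along a path of increments from $E$, bound the terminal term via hypothesis (ii), and compare growth rates using $\beta_{\ast}n\le s_{n}\le\beta^{\ast}n$; the paper does the bookkeeping by taking $k_{j}$-th roots and a $\limsup$, while you take logarithms and track the coefficient of $n\log n$, which amounts to the same thing. The only substantive deviation is in the boundary case $\alpha=\beta_{\ast}/\sigma>0$, where your sharper ``pushed to the top'' estimate of $\prod_{l}(m+s_{l}+D)$ (the paper uses the cruder $p_{m,j}\le(D+k_{j})^{j}$, which suffices after its $k_{j}$-th root normalization) would in fact get by with the slightly weaker hypothesis $A<b_{0}^{\beta_{\ast}}e^{\beta_{\ast}/\sigma}$.
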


\begin{proof}
	Assume that $a_{m} > 0$ for some $m \ge 0$. We derive a contradiction. If $m =0$, by condition $(i)$ there exists a $j \in E$ such that $a_{m + j} > 0$, so we may suppose that $m > 0$. Since
	\begin{equation}
		a_{m}\leq A\left(  m+D\right)  ^{\alpha}\max_{j\in E}a_{m+j},\label{eq1}%
	\end{equation}
	there exists an $l_{1}\in E$ with
	\[
	a_{m}\leq A\left(  m+D\right)  ^{\alpha}a_{m+l_{1}}.
	\]
	Applying inequality (\ref{eq1}) to $a_{m+l_{1}}$ instead of $a_{m}$, we
	see that there exists an $l_{2}\in E$ such that
	\[
	a_{m}\leq A^{2}\left(  m+D\right)  ^{\alpha}\left(  m+l_{1}+D\right)
	^{\alpha}a_{m+l_{1}+l_{2}}.
	\]
	Now proceed inductively, to conclude that there exist $l_{1}, \dots,l_{j}\in E$
	satisfying
	\[
	a_{m}\leq A^{j}\left(  m+D\right)  ^{\alpha}\left(  m+l_{1}+D\right)
	^{\alpha}\cdots\left(  m+D+l_{1}+\cdots+l_{j-1}\right)  ^{\alpha}%
	a_{m+l_{1}+\cdots+l_{j}}.
	\]
	Define for $j \ge 1$, 
	\[
	p_{m,j} :=\left(  m+D\right)  \left(  m+l_{1}+D\right)  \cdots\left(
	m+D+l_{1}+\cdots+l_{j-1}\right),
	\]
	and $k_{j}:=
	m+l_{1}+\cdots+l_{j}.$ Our assumption $a_{m}\leq A_{0}\frac{\left(
		m+D_{0}\right)  ^{\alpha_{0}}}{b_{0}^{m}}\frac{1}{m^{\frac{m}{\sigma}}}$
	implies that
	\[
	a_{k_{j}}\leq A_{0}\frac{\left(  k_{j}+D_{0}\right)  ^{\alpha_{0}}}%
	{b_{0}^{k_{j}}}k_{j}^{-\frac{k_{j}}{\sigma}}, \
	\]
	so we have
	\[
	a_{m}\leq A_{0}\frac{\left(  k_{j}+D_{0}\right)  ^{\alpha_{0}}}{b_{0}^{k_{j}}%
	}A^{j}p_{m,j}^{\alpha}k_{j}^{-\frac{k_{j}}{\sigma}}.
	\]
	Taking $k_{j}$-th roots in the last inequality shows that
	\begin{equation}
		a_{m}^{\frac{1}{k_{j}}}\leq A_{0}^{\frac{1}{k_{j}}}\frac{\left(  k_{j}%
			+D_{0}\right)  ^{\frac{\alpha_{0}}{k_{j}}}}{b_{0}}A^{\frac{j}{k_{j}}}%
		p_{m,j}^{\frac{\alpha}{k_{j}}}k_{j}^{-\frac{1}{\sigma}}.\label{eqNEW}%
	\end{equation}
	Since $k_{j}=m+l_{1}+\cdots+l_{j}\geq m+\beta_{\ast}j > \beta_{\ast}j \ge j$,   we see
	that as $j \to \infty$, $k_{j}\rightarrow\infty$, so
	$a_{m}^{\frac{1}{k_{j}}}\rightarrow 1$,  $A_{0}
	^{\frac{1}{k_{j}}}\rightarrow 1$, $\left(  k_{j}+D_{0}\right)
	^{\frac{\alpha_{0}}{k_{j}}}\rightarrow1$, and $A^{\frac{j}{k_{j}}} \le
	A^{\frac{1}{\beta_{\ast}}}$. Thus, it suffices  to prove
	that $p_{m,j}^{\frac{\alpha}{k_{j}}}k_{j}^{-\frac{1}{\sigma}}\rightarrow0$ in order to
	obtain a contradiction, since then the left hand side of inequality (\ref{eqNEW}) converges to
	$1$ and the right hand side to $0.$ 
	
	If $
	0\leq\alpha
	<
	\beta_{\ast}/\sigma$, then  $m+D+l_{1}+\cdots+l_{s}\leq D+k_{j}$ for
	$s=0, \dots ,j-1$ and $p_{m,j}\leq\left(  D+k_{j}\right)  ^{j},$ so
	\[
	p_{m,j}^{\frac{\alpha}{k_{j}}}k_{j}^{-\frac{1}{\sigma}}
	\leq
	\left(  D+k_{j}\right)
	^{\alpha\frac{j}{k_{j}}}k_{j}^{-\frac{1}{\sigma}}\leq\left(  D+k_{j}\right)
	^{\frac{\alpha}{\beta_{\ast}}}k_{j}^{-\frac{1}{\sigma}}=\left(  \frac{D+k_{j}
	}{k_{j}}\right)  ^{\frac{\alpha}{\beta_{\ast}}}k_{j}^{\left(  \frac{\alpha
		}{\beta_{\ast}}-\frac{1}{\sigma}\right)  }.
	\]
	Since $\frac{\alpha}{\beta_{\ast}}-\frac{1}{\sigma}<0$ the term on the right
	hand side converges to $0$. If  $
	0 <\alpha
	=
	\beta_{\ast}/\sigma$, then
	\[
	1=\lim\sup_{j\rightarrow\infty}a_{m}^{\frac{1}{k_{j}}}
	\leq
	\lim\sup
	_{j\rightarrow\infty}A_{0}^{\frac{1}{k_{j}}}\frac{\left(  k_{j}+D_{0}\right)
		^{\frac{\alpha_{0}}{k_{j}}}}{b_{0}}A^{\frac{j}{k_{j}}}\left(  \frac{D+k_{j}
	}{k_{j}}\right)  ^{\frac{\alpha}{\beta_{\ast}}}\leq\frac{1}{b_{0}}A^{\frac
		{1}{\beta_{\ast}}} < 1.
	\] 
	This finishes the proof when $\alpha\geq0.$
	
	Let us discuss next the case $\alpha<0.$ Since $k_{j}\geq\beta_{\ast}j\ $for
	$j\geq1$, we have
	\[
	p_{m,j}
	=
	\left(  m+D\right)  \left(  m+l_{1}+D\right)  \cdots\left(
	m+D+l_{1}+\cdots+l_{j-1}\right)  
	\geq
	\beta_{\ast}^{j-1}\left(  j-1\right)  !.
	\]
	From $\alpha<0$ we conclude that
	\[
	p_{m,j}^{\frac{\alpha}{k_{j}}}\leq\left(  \beta_{\ast}\right)^{\alpha
		\frac{j-1}{k_{j}}}\left[  \left(  j-1\right)  !\right]  ^{\frac{\alpha}{k_{j}%
	}}.
	\]
	Recall that  $\beta^{\ast} :=\max\left\{  l:l\in E\right\}.$ Since
	$k_{j}=m+l_{1}+\cdots+l_{j}$, it follows that
	\[
	\beta_{\ast}j\leq k_{j}\leq m+\beta^{\ast}j.
	\]
	Thus $\left(  \beta_{\ast}\right)  ^{\alpha\frac{j-1}{k_{j}}}$ is bounded above.
	Furthermore, a version of Stirling's formula 
	with explicit bounds for $m \ge 0$ tells us that
	\[
	m!\geq\left(  2\pi m\right)  ^{\frac{1}{2}}\left(  \frac{m}{e}\right)
	^{m}e^{\frac{1}{12m+1}}.
	\]
	Using $\alpha<0$ and the preceding formula with $m=j-1 \ge 1$ we obtain
	\[
	\left[  \left(  j-1\right)  !\right]  ^{\frac{\alpha}{k_{j}}}\leq\left(
	2\pi\left(  j-1\right)  \right)  ^{\frac{1}{2}\frac{\alpha}{k_{j}}}\left(
	\frac{j-1}{e}\right)  ^{\alpha\frac{j-1}{k_{j}}}e^{\frac{\alpha}{k_j (12\left(
			j-1\right)  +1)}}.
	\]
	Note that
	\[
	\left(  \frac{1}{e}\right)  ^{\alpha\frac{j-1}{k_{j}}}=e^{-\alpha\frac
		{j-1}{k_{j}}}\leq e^{-\frac{\alpha}{\beta_{\ast}}},
	\]
	so
	\[
	\lim\sup_{j\rightarrow\infty}p_{m,j}^{\frac{\alpha}{k_{j}}}k_{j}^{-\frac{1}{\sigma}
	}
	\leq
	\lim\sup_{j\rightarrow\infty}  \left(  \beta_{\ast}\right)^{\alpha
		\frac{j-1}{k_{j}}} e^{-\frac{\alpha}{\beta_{\ast}}}\left(
	j-1\right)  ^{\alpha\frac{j-1}{k_{j}}}k_{j}^{-\frac{1}{\sigma}}
	\]
	Since $k_{j}\leq m+\beta^{\ast}j$ we have
	\[
	\frac{k_{j}}{j-1}
	\leq
	\frac{m}
	{j-1}+\beta^{\ast}+\frac{1}{j - 1}\beta^{\ast}.
	\]
	By assumption $\alpha  < \beta^{\ast}/  \sigma$, where $\alpha < 0$ (and we can have either $\sigma < 0$ or $\sigma > 0$). Let $\varepsilon>0$ be such that $\alpha <  (1 + \varepsilon)\beta^{\ast}/\sigma$. Now there exists a $j_{\varepsilon}$ such that for all
	$j\geq j_{\varepsilon}$,
	\[
	\frac{k_{j}}{j-1}\leq\left(  1+\varepsilon\right)  \beta^{\ast}.
	\]
	Then whenever $j\geq j_{\varepsilon}$,
	\[
	j-1
	\geq
	k_{j}\left(
	1+\varepsilon\right)^{-1}\left(  \beta^{\ast}\right)^{-1}.
	\]
	Using again that $\alpha<0$ we get
	\begin{align*}
		\left(  j-1\right)  ^{\alpha\frac{j-1}{k_{j}}} &  \leq k_{j}^{\alpha\frac
			{j-1}{k_{j}}}\left(  1+\varepsilon\right)  ^{-\alpha\frac{j-1}{k_{j}}}\left(
		\beta^{\ast}\right)  ^{-\alpha\frac{j-1}{k_{j}}}\\
		&  \leq k_{j}^{\alpha\frac{j-1}{k_{j}}}\left(  1+\varepsilon\right)
		^{-\alpha\frac{1}{\beta_{\ast}}}\left(  \beta^{\ast}\right)  ^{-\alpha\frac
			{1}{\beta_{\ast}}}.
	\end{align*}
	Now
	\[
	\lim\sup_{j\rightarrow\infty}p_{m,j}^{\frac{\alpha}{k_{j}}}k_{j}^{-\frac{1}{\sigma}
	}
	\leq 
	e^{-\frac{\alpha}{\beta_{\ast}}}\left(  1+\varepsilon\right)
	^{-\alpha\frac{1}{\beta_{\ast}}}\left(  \beta^{\ast}\right)  ^{-\alpha\frac
		{1}{\beta_{\ast}}}
	\lim\sup_{j\rightarrow\infty}
	\left(  \beta_{\ast}\right)^{\alpha
		\frac{j-1}{k_{j}}}
	k_{j}^{\alpha\frac{j-1}{k_{j}}
	}k_{j}^{-\frac{1}{\sigma}},
	\]
	and this limit equals $0$ since
	\[
	\alpha\frac{j-1}{k_{j}}
	\leq
	\frac{\alpha}{\left(
		1+\varepsilon\right)  \beta^{\ast}}
	< \frac{1}{\sigma}.
	\]
	\end{proof}

\section{The apolar inner product, and statement of the main result}

Let
\[
P\left(  z\right)  =\sum_{\alpha\in\mathbb{N}^{d},\left\vert
\alpha\right\vert \leq N}c_{\alpha}z^{\alpha}\text{ and }Q\left(  z\right)
=\sum_{\alpha\in\mathbb{N}^{d},\left\vert \alpha\right\vert \leq
M}d_{\alpha}z^{\alpha}
\] be complex polynomials in $d$ variables.
The \emph{apolar inner product} $\langle\cdot,\cdot\rangle_{a}$ on
$\mathbb{C}[z_{1},\dots,z_{d}]$ is defined by
\begin{equation}
	\label{apolar}\left\langle P,Q\right\rangle _{a} :=\left(  Q^{*}\left(
	D\right)  P\right)  \;(0)=\sum_{\alpha\in\mathbb{N}^{d}}\alpha!c_{\alpha
	}\overline{d_{\alpha}}.
\end{equation}
Note that for monomials $z^{\alpha}$ and $z^{\beta}$, definition
(\ref{apolar}) means that $\left\langle z^{\alpha}, z^{\beta}\right\rangle
_{a} = 0$ if $\alpha\ne\beta$, and $\left\langle z^{\alpha}, z^{\alpha
}\right\rangle _{a} = \alpha!$. Furthermore, from these relations definition
(\ref{apolar}) is recovered by sesquilinearity.

A fundamental property of the apolar inner product, easily obtained from the
definition (alternatively, see \cite[Proposition 3]{AlRe23}) is that given
polynomials $Q, f$ and $g$, we have
\begin{equation}
	\left\langle Q^{\ast}\left(  D\right)  f,g\right\rangle _{a}=\left\langle
	f,Q\cdot g\right\rangle _{a} . \label{eqFischeradjoint}%
\end{equation}
From the apolar inner product we obtain the associated \emph{apolar norm} in
the usual way: $\left\Vert f\right\Vert _{a} := \sqrt{\left\langle
	f,f\right\rangle _{a}}. $

\vskip .2 cm We are now ready to state our main result. Regarding the notation used below on the non-homogeneous polynomial
$P= P_{\beta_{1}}+\cdots+P_{\beta_{2}}+P_{k}$, where the degrees of the homogeneous parts increase from left to right,
we suppose not only that $P_k \ne 0$, but also that  $P_{\beta_{1}} \ne 0$ and 
$P_{\beta_{2}} \ne 0$; however, it may happen 
that $
\beta_{1} = \beta_{2}$, so actually $P= P_{\beta_{1}} +P_{k}$.
\begin{theorem}
	\label{main} Let $0\leq\beta_{1}\leq\beta_{2}\leq k-1$ and let $P$ be a
	non-homogeneous polynomial of degree $k\geq 2$ with homogenous expansion $P= P_{\beta_{1}}+\cdots+P_{\beta_{2}}+P_{k}$,  where  $P_{\beta_{1}} \ne 0$ and 
	$P_{\beta_{2}} \ne 0$.
	Suppose there exist constants $\tau\geq0$ with $\tau\leq k -1$, and $C>0$, such
	that for all natural numbers $m\geq0$ and all homogeneous polynomials $g_{m}$
	of degree $m$, we have
	\begin{equation}
		\left\Vert P_{k}g_{m}\right\Vert _{a}\geq C\left(  m+1\right)  ^{\tau
			/2}\left\Vert g_{m}\right\Vert _{a}.\label{cond3}
	\end{equation}
	Let $
	\rho <  2\left(  k-\beta_{2}\right) / (k-\tau)$   if $\beta_{2}
	-\tau\geq0$, and let $\rho < 2(k-\beta_{1}) / (k+\beta_{2}-\beta_{1}-\tau)$  when $\beta_{2}
	-\tau < 0$.
	If $\varphi:\mathbb{C}^{d}\rightarrow\mathbb{C}$ is an entire function of
	order $\rho$ satisfying $P_{k}^{\ast}\left(  D\right)  \left(  P\varphi
	\right)  =0$, then $\varphi=0.$
\end{theorem}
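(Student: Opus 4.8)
The strategy is to reduce the differential equation $P_k^\ast(D)(P\varphi)=0$ to a recursion on the apolar norms of the homogeneous components of $\varphi$, and then to invoke Lemma \ref{lemmaSeq}. Write $\varphi=\sum_{m\ge 0}\varphi_m$ with $\varphi_m$ homogeneous of degree $m$, and set $a_m:=\|\varphi_m\|_a$. Since $P=P_{\beta_1}+\cdots+P_{\beta_2}+P_k$, the operator $P_k^\ast(D)$ lowers degree by exactly $k$, so the equation $P_k^\ast(D)(P\varphi)=0$, decomposed into homogeneous pieces of each fixed degree, reads
\[
P_k^\ast(D)\bigl(P_k\varphi_m\bigr) \;=\; -\,P_k^\ast(D)\!\!\sum_{\beta_1\le j\le \beta_2}\!\! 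P_j\,\varphi_{m+k-j}
\]
for each $m$. The left-hand side is, up to the apolar adjoint identity \eqref{eqFischeradjoint}, controlled from below: pairing with $P_k\varphi_m$ and using \eqref{eqFischeradjoint} twice gives $\langle P_k^\ast(D)(P_k\varphi_m),\varphi_m\rangle_a = \|P_k\varphi_m\|_a^2 \ge C^2(m+1)^\tau a_m^2$ by the Khavinson--Shapiro bound \eqref{cond3}. The right-hand side is estimated from above using the crude operator-norm bound for multiplication and differentiation on homogeneous polynomials in the apolar norm (a constant times a power of the degree), which will introduce a factor like $(m+k)^{k/2}$ or so. Combining, one obtains an inequality of the shape
\[
a_m \;\le\; A\,(m+D)^{\alpha}\,\max_{\beta_1\le j\le\beta_2} a_{m+k-j},
\]
i.e. condition (i) of Lemma \ref{lemmaSeq} with $E=\{k-\beta_2,\dots,k-\beta_1\}$, hence $\beta_\ast=k-\beta_2$, $\beta^\ast=k-\beta_1$, and $\alpha=(k-\tau)/2 - \text{(exponent from \eqref{cond3})}$; after the bookkeeping $\alpha$ should come out as $(k-\tau)/2$ minus $\tau/2$-type corrections, ultimately $\alpha = \tfrac{k-\tau}{2}$ in the relevant normalization, or a shifted version of it.

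Next I would install condition (ii) of Lemma \ref{lemmaSeq}. The hypothesis that $\varphi$ is entire of order $\rho$ translates, via the standard Cauchy-type estimates relating Taylor coefficients to the order, into a bound on $\|\varphi_m\|_a$ of the form $a_m \le A_0 (m+D_0)^{\alpha_0} b_0^{-m} m^{-m/\rho}$ — here one uses that the apolar norm of a homogeneous polynomial of degree $m$ differs from the sup-norm on the unit sphere (or the $\ell^2$ norm of coefficients) only by factors that are at most $\sqrt{m!}$ or $1/\sqrt{m!}$, and $\sqrt{m!}\sim m^{m/2}$-type terms get absorbed into the $m^{-m/\sigma}$ shape with a modified $\sigma$. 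Thus condition (ii) holds with $\sigma$ a simple function of $\rho$ (something like $\sigma = 2\rho/(2-\rho)$ or $\sigma=\rho$ depending on which norm convention is pushed through). Then the arithmetic condition $0\le\alpha<\beta_\ast/\sigma$ of Lemma \ref{lemmaSeq} becomes exactly the stated bound $\rho < 2(k-\beta_2)/(k-\tau)$ in the case $\beta_2-\tau\ge 0$; when $\beta_2-\tau<0$ one has $\alpha<0$, and the relevant condition is $\alpha<\beta^\ast/\sigma$, which unwinds to $\rho < 2(k-\beta_1)/(k+\beta_2-\beta_1-\tau)$. Applying the Lemma yields $a_m=0$ for all $m$, i.e. $\varphi=0$.

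The main obstacle is getting the constants and exponents in step one to line up so that $\alpha$, $\sigma$, $\beta_\ast$, $\beta^\ast$ produce precisely the two threshold values for $\rho$ in the statement — in particular, tracking the sign of $\beta_2-\tau$ correctly, since this is what decides whether $\alpha\ge 0$ (so that $\beta_\ast=k-\beta_2$ governs) or $\alpha<0$ (so that $\beta^\ast=k-\beta_1$ governs), and ensuring the lower bound \eqref{cond3} is used efficiently rather than wastefully. A secondary technical point is the passage between the apolar norm and whatever norm most cleanly encodes "order $\rho$" for entire functions of several variables: one must verify the comparison factors are subexponential-after-root, i.e. contribute only to $A_0$, $D_0$, $\alpha_0$ and to a harmless adjustment of $\sigma$, not to the leading $m^{-m/\sigma}$ rate. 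I do not expect the endpoint case $\alpha=\beta_\ast/\sigma$ to play a role here since the hypotheses are stated with strict inequalities on $\rho$, but if one wanted the borderline $\rho$ one would need $A<b_0^{\beta_\ast}$, which is why the Lemma was stated with that refinement.
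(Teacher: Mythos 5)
Your overall architecture is the same as the paper's (homogeneous expansion, a recursion giving condition (i) of Lemma \ref{lemmaSeq}, the order-$\rho$ coefficient decay giving condition (ii), then the lemma), but the step that actually carries the theorem is missing, and the explicit exponents you propose for it are wrong. The efficient route is \emph{not} to bound $P_k^{\ast}(D)\bigl(P_j\varphi_{m+k-j}\bigr)$ by an operator-norm estimate producing ``a factor like $(m+k)^{k/2}$''. Instead one pairs the degree-$(m+k)$ identity $u_{m+k}=P_k\varphi_m+\sum_{j}P_{k-j}\varphi_{m+j}$ with $P_k\varphi_m$, uses $\langle u_{m+k},P_k\varphi_m\rangle_a=\langle P_k^{\ast}(D)u_{m+k},\varphi_m\rangle_a=0$ via \eqref{eqFischeradjoint}, and then Cauchy--Schwarz, so that no differential operator ever has to be estimated: one gets $\|P_k\varphi_m\|_a\le\sum_{j\in E}\|P_{k-j}\varphi_{m+j}\|_a$, and by Lemma \ref{lemmaD} the power of $m$ that enters is $(k-j)/2\le\beta_2/2$, i.e.\ the degree of the \emph{low-order} multiplier, not $k/2$. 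Applying \eqref{cond3} to $\|P_k\varphi_m\|_a$ then yields condition (i) with $\alpha=(\beta_2-\tau)/2$, and Proposition \ref{cond2} (where the $\sqrt{(m+d-1)!}$ conversion of Lemma \ref{lemma17} is what shifts the rate) yields condition (ii) with $\sigma=\bigl(\tfrac{1}{\rho+\varepsilon}-\tfrac12\bigr)^{-1}$. With these values, $\alpha<\beta_\ast/\sigma$ (where $\beta_\ast=k-\beta_2$) is \emph{exactly} $\rho+\varepsilon<2(k-\beta_2)/(k-\tau)$, and $\alpha<\beta^{\ast}/\sigma$ (where $\beta^{\ast}=k-\beta_1$) is exactly $\rho+\varepsilon<2(k-\beta_1)/(k+\beta_2-\beta_1-\tau)$.

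With your stated bookkeeping the deduction fails quantitatively: taking $\alpha=(k-\tau)/2$ (or any exponent built from $k/2$ rather than $\beta_2/2$), the requirement $\alpha<\beta_\ast/\sigma$ unwinds to $\rho<2(k-\beta_2)/(2k-\beta_2-\tau)$, which is strictly stronger than the hypothesis of the theorem whenever $\beta_2>\tau$ is possible, so the argument as sketched proves only a weaker statement. Likewise you leave $\sigma$ ambiguous between $2\rho/(2-\rho)$ (correct, up to $\varepsilon$) and $\rho$ (incorrect; it would again distort the threshold). Since the whole content of the theorem is that the thresholds $2(k-\beta_2)/(k-\tau)$ and $2(k-\beta_1)/(k+\beta_2-\beta_1-\tau)$ come out exactly, asserting that the conditions ``unwind to'' the stated bounds without fixing $\alpha=(\beta_2-\tau)/2$ and $\sigma=\bigl(\tfrac{1}{\rho+\varepsilon}-\tfrac12\bigr)^{-1}$ is a genuine gap, not a routine verification. (Minor further points handled in the paper and absent from your sketch: choosing $\varepsilon$ with $\rho+\varepsilon\ne 2$ so that $\sigma\ne0$ when $\alpha<0$, and normalizing $A=\max\{1,C^{-1}(D_{\beta_2}+\delta)\}$ so Lemma \ref{lemmaSeq} applies.)
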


Note that (\ref{cond3}) always holds when $\tau=0$, as a consequence of
Bombieri's inequality, cf. \cite{BBEM90}. When $\tau\ge1$, we call the
inequalities (\ref{cond3}) \emph{Khavinson-Shapiro bounds}, cf. \cite{AlRe23}.
It is shown in \cite[Theorem 5]{AlRe23} that when the number of variables $d > 1$, and $k > 1$, a necessary contition for  (\ref{cond3}) to hold is having $\tau \le  k - 1$. 
A more detailed discussion of condition (\ref{cond3}) can be found in
\cite{AlRe24}. If $k = 1$, the Fischer operator $F_{P_{k}^{*}P}$ is always bijective on the space of entire functions, by \cite[Theorem 8]{AlRe23}. Regarding the bounds on the order, we mention that if $\beta_{2}
-\tau\geq0$, we have
$$
\frac{2}{ k}
\le
\frac{2 (k - \beta_{2})}{ k-\tau}
\le
2,
$$
while  if $\beta_{2}
-\tau <0$, then 
$$
2
<
\frac{2 (k - \beta_{1})}{k+\beta_{2}-\beta_{1}-\tau}
\le
2 k.
$$

With respect to the existence result of \cite[Theorem 2]{AlRe23}, there the assumption on the order is 
$$
\rho
<
\frac{2 (k - \beta_{2})}{ k-\tau},
$$
which coincides with the uniqueness bound for $\rho$ when
$\beta_{2} \ge \tau$. But  if $\beta_{2}
<\tau$, then 
$$
\frac{2 (k - \beta_{1})}{k+\beta_{2}-\beta_{1}-\tau}
\le
\frac{2 (k - \beta_{2})}{ k-\tau},
$$
so the restriction on the order is at least as strong for the proof of uniqueness as for existence.

\section{Entire functions of finite order}

For $z \in\mathbb{C}^{d}$ we write $\left\vert z\right\vert =\sqrt{\left\vert
	z_{1}\right\vert ^{2}+ \cdots+\left\vert z_{d}\right\vert ^{2}}.$  We use the standard notations for multi-indices $\alpha=\left(
\alpha_{1}, \dots,\alpha_{d}\right)  \in\mathbb{N}^{d}$, namely $\alpha
!=\alpha_{1}! \cdots\alpha_{d}!$, $\left\vert \alpha\right\vert =\alpha
_{1}+\cdots+\alpha_{d}$, and $z^{\alpha}= z_{1}^{\alpha_{1}} \cdots
z_{d}^{\alpha_{d}}.$ We also write $\mathbb{S} ^{2d-1}=\left\{  z\in
\mathbb{C}^{d}:\left\vert z\right\vert =1\right\}  .$

Suppose that $f:\mathbb{C} ^{d}\rightarrow\mathbb{C}$ is an entire function.
Let
\begin{equation}
	M_{\mathbb{C}^{d}}\left(  f,r\right)  :=\sup\left\{  \left\vert f\left(
	z\right)  \right\vert :z\in\mathbb{C}^{d},\left\vert z\right\vert =r\right\}
	.
\end{equation}
The order $\rho_{\mathbb{C}^{d}}\left(  f\right)  $ of $f$ is then defined as
\[
\rho_{\mathbb{C}^{d}}\left(  f\right)  =\lim_{r\rightarrow\infty}\sup\frac{
	\log\log M_{\mathbb{C}^{d}}\left(  f,r\right)  }{\log r}.
\]
It is easy to see that $f$ has finite order $\leq\rho$ if and only if for all
$\varepsilon>0$ there exists a positive number $r_{\varepsilon}$ such that
\[
M_{\mathbb{C}^{d}}\left(  f,r\right)  \leq e^{r^{\rho+\varepsilon}}\text{ for
	all }r\geq r_{\varepsilon}.
\]
Using the Taylor expansion one can write
\[
f\left(  z\right)  =\sum_{m=0}^{\infty}f_{m}\left(  z\right)  \text{ where }
f_{m}\left(  z\right)  =\sum_{\left\vert \alpha\right\vert =m}c_{m,\alpha
}z^{\alpha},
\]
so each $f_{m}\left(  z\right)  $ is a homogeneous polynomial of degree $m$.
Given $\rho\ge0$, it is well known that $\rho_{ \mathbb{C}^{d}}\left(
f\right)  \leq\rho$ if and only if for every $\varepsilon>0$, there exists an
$m_{0} \geq0$ such that for every $m \ge m_{0}$ the following bounds hold:
\begin{equation}
	\max_{\theta\in\mathbb{S}^{2 d-1}}\left\vert f_{m}\left(  \theta\right)
	\right\vert \leq\frac{1}{m^{m/\left(  \rho+\varepsilon\right)  }}.
	\label{eqtaylor}%
\end{equation}
As in the complex case, for $x=\left(  x_{1},\dots,x_{d}\right)  \in
\mathbb{R}^{d}$ the euclidean distance $\left\vert x\right\vert $ is given by
$\left\vert x\right\vert ^{2}=x_{1}^{2}+\cdots+x_{d}^{2}$. Let $B_{R}%
:=\left\{  x\in\mathbb{ R}^{n}:\left\vert x\right\vert <R\right\}  $ be the
open ball in $\mathbb{R}^{n}$ with center $0$ and radius $0<R\leq\infty.$

The following result is well known, see for instance \cite[Lemma 17]{AlRe23}.

\begin{lemma}
	\label{lemma17} \label{Bargma} Let $f_{m}$ be a homogeneous polynomial of
	degree $m$, and denote by $\mathbb{S}^{2d-1}$ the unit sphere in
	$\mathbb{R}^{2n}$. There is a dimensional constant $C_{d}>0$ such that,
	identifying $\mathbb{C}^{n}$ with $\mathbb{R}^{2n}$ as a measure space, we
	have
	\[
	\left\Vert f_{m}\right\Vert _{a}\leq C_{d}\sqrt{\left(  m+d-1\right)  !}
	\max_{\theta\in\mathbb{S}^{2d-1}}\left\vert f_{m}\left(  \theta\right)
	\right\vert .
	\]
	
\end{lemma}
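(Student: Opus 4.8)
The plan is to realize the apolar inner product as a Gaussian (Bargmann--Fock) integral and then exploit the homogeneity of $f_m$ in polar coordinates. First I would establish the integral representation
\[
\langle f, g\rangle_a = \frac{1}{\pi^d}\int_{\mathbb{C}^d} f(z)\,\overline{g(z)}\; e^{-|z|^2}\, dV(z),
\]
where $dV$ denotes Lebesgue measure on $\mathbb{C}^d\cong\mathbb{R}^{2d}$ (this is exactly the measure-space identification mentioned in the statement). By sesquilinearity it suffices to check the representation on monomials, and since the Gaussian weight factors over coordinates the verification reduces to the one-variable moments $\frac{1}{\pi}\int_{\mathbb{C}} z^{j}\,\overline{z^{k}}\,e^{-|z|^2}\,dV = \delta_{jk}\, j!$, a standard computation carried out in polar coordinates (the angular integral kills the off-diagonal terms, and the radial part is a Gamma integral). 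This matches the defining monomial relations $\langle z^{\alpha}, z^{\beta}\rangle_a=\delta_{\alpha\beta}\,\alpha!$ of the apolar product, so the two inner products coincide on all polynomials.

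Next I would apply this representation with $f=g=f_m$ and pass to polar coordinates $z=r\theta$, with $r=|z|$ and $\theta\in\mathbb{S}^{2d-1}$, writing the volume element as $r^{2d-1}\,dr\,d\sigma(\theta)$. Homogeneity gives the pointwise bound $|f_m(r\theta)|^{2} = r^{2m}\,|f_m(\theta)|^{2}\le r^{2m}\,M^{2}$, where $M:=\max_{\theta\in\mathbb{S}^{2d-1}}|f_m(\theta)|$. This is a bound on the entire integrand, so no cancellation among cross terms is required. Integrating out the angular variable against the surface area $\omega_{2d-1}$ of the sphere leaves the radial integral
\[
\int_0^\infty r^{2m+2d-1}\,e^{-r^2}\,dr = \tfrac12\,\Gamma(m+d)=\tfrac12\,(m+d-1)!,
\]
evaluated via the substitution $u=r^2$.

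Collecting the factors then yields
\[
\|f_m\|_a^{2} \le \frac{\omega_{2d-1}}{2\pi^d}\,(m+d-1)!\;M^{2},
\]
so the claim holds with the dimensional constant $C_d:=\sqrt{\omega_{2d-1}/(2\pi^d)}$. I expect no serious obstacle: the only point demanding care is the identification of the apolar product with the Gaussian inner product, which is precisely where the factorial $(m+d-1)!$ arises, through the Gamma function in the radial integral. Once that representation is secured, the homogeneity bound and the elementary radial computation finish the argument.
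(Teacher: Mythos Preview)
Your argument is correct: the Bargmann--Fock integral representation of the apolar product is verified on monomials exactly as you describe, and once you have
\[
\|f_m\|_a^{2}=\frac{1}{\pi^d}\int_{\mathbb{C}^d}|f_m(z)|^{2}e^{-|z|^{2}}\,dV(z),
\]
the polar-coordinate computation with homogeneity and the Gamma integral gives the stated bound with the explicit constant $C_d=\sqrt{\omega_{2d-1}/(2\pi^{d})}=1/\sqrt{(d-1)!}$.

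As for comparison: the paper does not prove this lemma at all; it simply records the statement and refers to \cite[Lemma~17]{AlRe23}. Your proof is the standard one (and presumably the one in the cited reference), so there is nothing further to contrast.
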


\begin{proposition}
	\label{cond2} Assume that $f:\mathbb{C}^{d}\rightarrow\mathbb{C}$ is entire
	and has order $\leq\rho.$ Then for every $m\geq1$ and every $\varepsilon>0$,
	we have $\left\Vert f_{m}\right\Vert _{a} \leq2\sqrt{\pi} \ C_{d} \ e^{- m/2}
	\ \left(  m+d-1\right)  ^{d/2} m^{m\left(  \frac{1}{2} - \frac{1}%
		{\rho+\varepsilon} \right)  }. $
\end{proposition}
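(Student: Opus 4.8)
The plan is to combine Lemma~\ref{lemma17}, which compares the apolar norm of a homogeneous polynomial with its sup norm on the sphere, with the description (\ref{eqtaylor}) of the Taylor components of a function of order $\le\rho$, and then to convert the factor $\sqrt{(m+d-1)!}$ into the stated elementary expression by means of Stirling's formula. Fix $\varepsilon>0$. By Lemma~\ref{lemma17},
\[
\left\Vert f_{m}\right\Vert _{a}\le C_{d}\,\sqrt{(m+d-1)!}\ \max_{\theta\in\mathbb{S}^{2d-1}}\left\vert f_{m}(\theta)\right\vert ,
\]
and, $f$ having order $\le\rho$, inequality (\ref{eqtaylor}) furnishes an index $m_{0}$ (depending on $\varepsilon$ and $f$) with $\max_{\theta}\left\vert f_{m}(\theta)\right\vert \le m^{-m/(\rho+\varepsilon)}$ for all $m\ge m_{0}$. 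Hence everything reduces to the elementary inequality
\[
\sqrt{(m+d-1)!}\ \le\ 2\sqrt{\pi}\ e^{-m/2}\,(m+d-1)^{d/2}\,m^{m/2}\qquad(m\ge1),
\]
after which one multiplies by $C_{d}\,m^{-m/(\rho+\varepsilon)}$ and reads off the claim.

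To establish that inequality I would start from the explicit Stirling upper bound $n!\le\sqrt{2\pi n}\,(n/e)^{n}e^{1/(12n)}$, take square roots with $n=m+d-1$, and split off the exponent as $(m+d-1)^{(m+d-1)/2}=(m+d-1)^{m/2}(m+d-1)^{(d-1)/2}$. The decisive step is the bound $(m+d-1)^{m/2}=m^{m/2}\bigl(1+\tfrac{d-1}{m}\bigr)^{m/2}\le m^{m/2}e^{(d-1)/2}$: the factor $e^{(d-1)/2}$ cancels part of the $e^{-(m+d-1)/2}$ coming from $(n/e)^{n/2}$ and leaves exactly $e^{-m/2}$. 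What survives is the polynomial factor $(2\pi)^{1/4}(m+d-1)^{(2d-1)/4}e^{1/(24(m+d-1))}$, which for every $m\ge1$ is at most $2\sqrt{\pi}\,(m+d-1)^{d/2}$, because $(2d-1)/4<d/2$, $m+d-1\ge1$, and $(2\pi)^{1/4}e^{1/24}<2\sqrt{\pi}$.

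The only genuine point of care is this last constant bookkeeping: one must check that after the several crude roundings (raising the exponent $(2d-1)/4$ to $d/2$, replacing $e^{1/(24(m+d-1))}$ by $e^{1/24}$, and so on) the margin is still positive for all $d\ge1$ and all $m\ge1$ --- it is, with room to spare. Finally, since (\ref{eqtaylor}) only supplies the coefficient estimate for $m\ge m_{0}(\varepsilon,f)$, for the finitely many remaining indices $1\le m<m_{0}$ the displayed inequality is understood up to enlarging the constant by an $f$-dependent factor, which is harmless for the later uses of the proposition.
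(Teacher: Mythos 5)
Your argument is correct and essentially the same as the paper's: Lemma~\ref{lemma17} combined with the coefficient bound (\ref{eqtaylor}) and Stirling's formula, the only (harmless) difference being that you apply Stirling directly to $(m+d-1)!$ whereas the paper first uses $(m+d-1)!\le m!\,(m+d-1)^{d}$ and applies Stirling to $m!$. Your closing caveat about the restriction $m\ge m_{0}(\varepsilon,f)$ in (\ref{eqtaylor}) in fact flags an imprecision present in the paper's own statement and proof as well, and your remedy (absorbing the finitely many small indices into an $f$-dependent constant) is exactly what the later application via condition (ii) of Lemma~\ref{lemmaSeq} requires.
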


\begin{proof}
	By Lemma \ref{lemma17} we know that
	\[
	\left\Vert f_{m}\right\Vert _{a}\leq C_{d}\sqrt{\left(  m+d-1\right)  !}%
	\frac{1}{m^{m/\left(  \rho+\varepsilon\right)  }}.
	\]
	Furthermore $\left(  m+d-1\right)  !\leq m!\left(  m+d-1\right)  ^{d}$. A
	standard version of Stirling's formula with explicit bounds for every $m\ge1$,
	yields
	\[
	m! < \sqrt{2\pi m}\left(  \frac{m}{e}\right)  ^{m}e^{\frac{1}{12m}} < 2
	\sqrt{\pi m}\left(  \frac{m}{e}\right)  ^{m}%
	\]
	since $ e^{\frac{1}{12 m}} < \sqrt{2}$. It follows that
	\begin{align*}
		\left\Vert f_{m}\right\Vert _{a}  &  \leq2C_{d}\sqrt{\pi}\left(  m+d-1\right)
		^{d/2}\left(  \frac{m}{e}\right)  ^{m/2}\frac{1}{m^{m/\left(  \rho
				+\varepsilon^{}\right)  }}\\
		&  =2C_{d}\sqrt{\pi} \ \frac{\left(  m+d-1\right)  ^{d/2}}{e^{m/2} \ }\frac
		{1}{ m^{m\left(  \frac{1}{\rho+\varepsilon}-\frac{1}{2}\right)  }}.
	\end{align*}
	
\end{proof}

With different normalizations, the following result is essentially due to B.
Beauzamy, cf. \cite[Formula (6)]{Beau97}. The exact statement appearing below is
\cite[Lemma 13]{AlRe23}.

\begin{lemma}
	\label{lemmaD} If $P\left(  z \right)  = {\displaystyle\sum_{\left\vert
			a\right\vert =k}} c_{\alpha}z^{\alpha}$ is a homogeneous polynomial of degree
	$k$, then
	\[
	\left\Vert Pf_{m}\right\Vert _{a}\leq\left\Vert f_{m}\right\Vert _{a}\left(
	1+m\right)  ^{\frac{k}{2}} {\displaystyle\sum_{\left\vert \alpha\right\vert
			=k}} \left\vert c_{\alpha}\right\vert \sqrt{\alpha!}.
	\]
	
\end{lemma}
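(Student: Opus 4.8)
The plan is to reduce the desired bound for the polynomial $P$ to a bound for a single monomial, using the triangle inequality for the apolar norm, and then to establish the monomial estimate by a direct orthogonality computation combined with an elementary factorial inequality. First, writing $P=\sum_{|\alpha|=k}c_{\alpha}z^{\alpha}$ and applying the triangle inequality, I would obtain
\[
\left\Vert Pf_{m}\right\Vert _{a}\leq\sum_{|\alpha|=k}|c_{\alpha}|\,\left\Vert z^{\alpha}f_{m}\right\Vert _{a},
\]
so it suffices to prove the per-monomial estimate $\left\Vert z^{\alpha}f_{m}\right\Vert _{a}\leq\sqrt{\alpha!}\,(1+m)^{k/2}\left\Vert f_{m}\right\Vert _{a}$ for each multi-index $\alpha$ with $|\alpha|=k$.

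Next, expanding $f_{m}=\sum_{|\beta|=m}b_{\beta}z^{\beta}$, I would observe that $z^{\alpha}f_{m}=\sum_{|\beta|=m}b_{\beta}z^{\alpha+\beta}$, where the monomials $z^{\alpha+\beta}$ are pairwise distinct as $\beta$ varies (with $\alpha$ fixed), and hence mutually orthogonal for the apolar inner product. Since $\left\langle z^{\alpha+\beta},z^{\alpha+\beta}\right\rangle_{a}=(\alpha+\beta)!$ and $\left\langle z^{\beta},z^{\beta}\right\rangle_{a}=\beta!$, the defining relations of $\langle\cdot,\cdot\rangle_{a}$ yield
\[
\left\Vert z^{\alpha}f_{m}\right\Vert _{a}^{2}=\sum_{|\beta|=m}|b_{\beta}|^{2}(\alpha+\beta)!,\qquad \left\Vert f_{m}\right\Vert _{a}^{2}=\sum_{|\beta|=m}|b_{\beta}|^{2}\beta!.
\]

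The crux is then the purely combinatorial inequality $(\alpha+\beta)!\leq\alpha!\,(1+m)^{k}\,\beta!$, valid whenever $|\alpha|=k$ and $|\beta|=m$. I would prove it coordinatewise: since $\frac{(\alpha+\beta)!}{\alpha!\,\beta!}=\prod_{i=1}^{d}\binom{\alpha_{i}+\beta_{i}}{\alpha_{i}}$, and each binomial coefficient factors as $\prod_{j=1}^{\alpha_{i}}\frac{\beta_{i}+j}{j}$ with every factor bounded by $\beta_{i}+1\leq m+1$, one gets $\binom{\alpha_{i}+\beta_{i}}{\alpha_{i}}\leq(m+1)^{\alpha_{i}}$; multiplying over $i$ produces the factor $(m+1)^{|\alpha|}=(1+m)^{k}$. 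Substituting $(\alpha+\beta)!\leq\alpha!\,(1+m)^{k}\,\beta!$ termwise into the expression for $\left\Vert z^{\alpha}f_{m}\right\Vert _{a}^{2}$ gives $\left\Vert z^{\alpha}f_{m}\right\Vert _{a}^{2}\leq\alpha!\,(1+m)^{k}\left\Vert f_{m}\right\Vert _{a}^{2}$; taking square roots and feeding this back into the triangle-inequality step completes the proof.

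I expect the factorial inequality to be the only nontrivial point, but it is mild: the key observation is that $\beta_{i}\leq|\beta|=m$ forces every factor $(\beta_{i}+j)/j$ to be at most $m+1$, so that the potentially large quotient $(\alpha+\beta)!/(\alpha!\,\beta!)$ is controlled by $(1+m)^{k}$, with the degree $k$ entering only through the exponent. The orthogonality step is immediate from the monomial relations for $\langle\cdot,\cdot\rangle_{a}$, and the triangle-inequality reduction requires nothing beyond the norm axioms.
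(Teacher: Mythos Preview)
Your proof is correct. The paper itself does not prove this lemma; it merely cites Beauzamy \cite{Beau97} for the essential content and \cite[Lemma 13]{AlRe23} for the exact formulation. Your argument supplies a clean, self-contained proof: the triangle-inequality reduction to monomials, the orthogonality expansion, and the factorial bound $\binom{\alpha_i+\beta_i}{\alpha_i}=\prod_{j=1}^{\alpha_i}\frac{\beta_i+j}{j}\le(\beta_i+1)^{\alpha_i}\le(m+1)^{\alpha_i}$ are all valid, and the pieces assemble exactly as you describe.
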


\section{Proof of Uniqueness}

We are now ready to prove Theorem \ref{main}.

\begin{proof}
	Let $\varphi:\mathbb{C}^{d}\rightarrow\mathbb{C}$ be an entire function of
	order $\rho<\frac{2\left(  k-\beta_{2}\right)  }{k-\tau}$, and let $P$ be a
	polynomial of degree $k\geq1$ with homogenous expansion $P=P_{\beta_{1}%
	}+\cdots+P_{\beta_{2}}+P_{k}$, where $0\leq\beta_{1}\leq\beta_{2}\leq k-1.$
	Let us write $\varphi=\sum_{m=0}^{\infty}\varphi_{m}$, with each $\varphi_{m}$
	a homogeneous polynomial of degree $m$. Then $u:=P\varphi$ can be expressed as
	$u=\sum_{m=0}^{\infty}u_{m},$ where the homogeneous polynomials $u_{m}$ of
	degree $m$ satisfy
	\begin{equation}
		u_{m+k}=P_{k}\varphi_{m}+....+P_{0}\varphi_{m+k}.\label{eqrecurs}%
	\end{equation}

	Note that $P_{k}^{\ast}\left(  D\right)  u=0$ implies that $P_{k}^{\ast
	}\left(  D\right)  u_{m+k}=0$ for every $m \ge 0$, by uniqueness of the representation of $u$ as a
	series of homogeneous polynomials. Taking the inner product with $P_{k}%
	\varphi_{m}$ on both sides of (\ref{eqrecurs}) and using $P_{k}^{\ast}\left(
	D\right)  u_{m+k}=0$, we obtain
	\[
	\left\Vert P_{k}\varphi_{m}\right\Vert _{a}^{2}+\sum_{j=1}^{k}\left\langle
	P_{k-j}\varphi_{m+j},P_{k}\varphi_{m}\right\rangle _{a}=\left\langle
	u_{m+k},P_{k}\varphi_{m}\right\rangle _{a}=\left\langle P_{k}^{\ast}\left(
	D\right)  u_{m+k},\varphi_{m}\right\rangle _{a}=0,
	\]
	so
	\[
	\left\Vert P_{k}\varphi_{m}\right\Vert _{a}^{2}\leq\sum_{j=1}^{k}\left\vert
	\left\langle P_{k-j}\varphi_{m+j},P_{k}\varphi_{m}\right\rangle _{a}%
	\right\vert \leq\sum_{j=1}^{k}\left\Vert P_{k-j}\varphi_{m+j}\right\Vert
	_{a}\left\Vert P_{k}\varphi_{m}\right\Vert _{a}%
	\]
	by the Cauchy-Schwarz inequality. Thus $\left\Vert P_{k}\varphi_{m}\right\Vert
	_{a}\leq\sum_{j=1}^{k}\left\Vert P_{k-j}\varphi_{m+j}\right\Vert _{a}.$ By
	Lemma \ref{lemmaD}, for each $j\in\{1,\dots,k\}$ there exists a constant
	$D_{k-j}$, which depends only on $P_{k-j},$ and satisfies
	\[
	\left\Vert P_{k-j}\varphi_{m+j}\right\Vert _{a}\leq D_{k-j}\left\Vert
	\varphi_{m+j}\right\Vert _{a}\left(  1+m\right)  ^{\left(  k-j\right)  /2}.
	\]
	Defining
	\[
	E:=\left\{  j\in\left\{  1,\dots,k\right\}  :P_{k-j}\neq0\right\}
	\subset\left\{  k-\beta_{2},\dots,k-\beta_{1}\right\}  ,
	\]
	we conclude that
	\begin{align*}
		\left\Vert P_{k}\varphi_{m}\right\Vert _{a} &  \leq\sum_{j\in E}%
		D_{k-j}\left\Vert \varphi_{m+j}\right\Vert _{a}\left(  1+m\right)  ^{\left(
			k-j\right)  /2}\\
		&  \leq\max_{s\in E}\left\Vert \varphi_{m+s}\right\Vert _{a}\sum_{j\in
			E}D_{k-j}\left(  1+m\right)  ^{\left(  k-j\right)  /2}.
	\end{align*}
	Note also that from the assumptions
	$P_{\beta_{1}} \ne 0$ and 
	$P_{\beta_{2}} \ne 0$ we obtain $\beta_* = k - \beta_2$ and  $\beta^* = k - \beta_1$.
	The change of indices $n:=k-j$ leads to
	\begin{align*}
		\sum_{j\in E}D_{k-j}\left(  1+m\right)  ^{\left(  k-j\right)  /2} &
		=\sum_{n= \beta_1 }^{\beta_{2}}D_{n}\left(  1+m\right)  ^{n/2}\\
		&  = \left(  1+m\right)  ^{\beta_{2}/2}\left(  \sum_{n= \beta_1 }^{\beta_{2}}
		\frac{D_{n}}{\left(  1+m\right)^{\left(  \beta_{2}-n\right) /2}}\right).
	\end{align*}
	Next we fix $0<\delta\ll1$ and take $m_{0}=m_{0}(\delta)\gg1$ so that
	\[
	\sum_{n = \beta_1 }^{\beta_{2}}\frac{D_{n}}{\left(  1+m_{0}\right)^{\left(  \beta
			_{2}- n \right)  /2}}\leq D_{\beta_{2}}+\delta.
	\]
	By hypothesis there exist $\tau\geq0$ and $C>0$ such that for all $m \ge m_0$,
	\[
	C\left(  m+1\right)  ^{\tau/2}\left\Vert \varphi_{m}\right\Vert_a \leq\left\Vert
	P_{k}\varphi_{m}\right\Vert _{a}.
	\]
	Hence
	\[
	\left\Vert \varphi_{m}\right\Vert _{a}\leq C^{-1}\left(  D_{\beta_{2}}%
	+\delta\right)  \frac{\left(  1+m\right)  ^{\beta_{2}/2}}{\left(  1+m\right)
		^{\tau/2}}\max_{j\in E}\left\Vert \varphi_{m+j}\right\Vert _{a}\ .
	\]
	Let us write $a_{m}:=\left\Vert \varphi_{m}\right\Vert _{a}$, $\alpha:=\left(
	\beta_{2}-\tau\right)  /2$,  and $A:= \max\{1,  C^{-1}\left(  D_{\beta_{2}}%
	+\delta\right)\}  $. Then
	\[
	a_{m}\leq A\left(  m+1\right)  ^{\alpha}\max_{j\in E}a_{m+j}%
	\]
	for all $m\in\mathbb{N}$, so the first condition in Lemma \ref{lemmaSeq} is
	satisfied. To check the second condition we use Proposition \ref{cond2}, which
	tells us that for every $\varepsilon>0$ the following inequality holds:
	\[
	a_{m}\leq2\sqrt{\pi}C_{d}\frac{\left(  m+d-1\right)  ^{d/2}}{e^{m/2}}\frac
	{1}{m^{m\left(  \frac{1}{\rho+\varepsilon}-\frac{1}{2}\right)  }}.
	\]
	so it holds for $\sigma=\sigma_{\varepsilon}:= (\frac{1}{\rho+\varepsilon}
	-\frac{1}{2})^{-1}.$  
	
	Assume that $\beta_{2}-\tau\geq0,$ so $\alpha\geq0$ and $\rho<2\frac{k-\beta_2
	}{k-\tau}.$ Choose $\varepsilon>0$ so that
	\begin{equation}\label{ep}
		\rho+\varepsilon<2\frac{k-\beta_2}{k-\tau}
		\le 2.
	\end{equation}
	Since $\frac{1}{\sigma}=\frac{1}{\rho+\varepsilon}-\frac{1}{2}$, 
	by Lemma \ref{lemmaSeq} we have that $a_{m} = 0$ for every $m \ge 0$, provided $\frac{1}%
	{\sigma}>\frac{\alpha}{\beta_{\ast}}.$ This condition means that
	\[
	\frac{1}{\rho+\varepsilon}-\frac{1}{2}
	>
	\frac{1}{2}\frac{\beta_2-\tau}{k-\beta_2},
	\] 
	which is equivalent to the first inequality in (\ref{ep}).
	
	Now assume that $\beta_{2}-\tau<0$, and thus $\alpha < 0$. By the bound on $\rho$  there exists an $\varepsilon > 0$ such that 
	\[
	2 
	\ne
	\rho + \varepsilon
	<
	2\frac{k-\beta_{1}}{k+\beta_{2}-\beta_{1}-\tau}.
	\]
	Setting
	$\sigma:=\frac{1}{\rho+\varepsilon}%
	-\frac{1}{2}$, we conclude that 
	\[
	\frac{\alpha}{\beta^{\ast}}
	=
	\frac{1}{2}\frac{\left(  \beta_{2}-\tau\right)  }{k-\beta_{1}}<\frac{1}%
	{\sigma},
	\]
	so once again the hypotheses of Lemma \ref{lemmaSeq} are satisfied.
\end{proof}


\begin{thebibliography}{99}                                                                                               %
	
	
	\bibitem {AlRe23}Aldaz, J.M., Render H.: \emph{A Fischer type decomposition
		theorem from the apolar inner product.} Anal. Math. Phys. \textbf{13} 91 (2023)
29  pp.
	
	\bibitem {AlRe24}Aldaz, J.M., Render H.: \emph{Asympotic bounds for Bombieri's
		inequality on products of homogeneous polynomials.}  arXiv:2402.19153.
	
	
	\bibitem {BBEM90}Beauzamy, B., Bombieri, E., Enflo, P., Montgomery, H.L.:
	\emph{Products of Polynomials in Many Variables.} J. Number Theory {\bf 36} 	(1990) 219--245.
	

	\bibitem {Beau97}Beauzamy, B.: \emph{Extremal products in Bombieri's norm.}
	Rend. Istit. Mat. Univ. Trieste, Suppl. Vol. XXVIII (1997) 73--89.
	
	

	\bibitem {EbSh95} Ebenfelt, P., Shapiro, H.S.: \emph{The Cauchy--Kowalevskaya
		theorem and Generalizations.} Commun. Partial Diffential Equations, {\bf 20} (1995) 939--960.
	
	\bibitem {EbSh96}Ebenfelt, P., Shapiro, H.S.: \emph{The mixed Cauchy problem
		for holomorphic partial differential equations.} J. D'Analyse Math. {\bf  65} (1995) 237--295.
	
	\bibitem {KhSh92}Khavinson, D., Shapiro, H.S.: \emph{Dirichlet's problem when
		the data is an entire function.} Bull. London Math. Soc. \textbf{24}  (1992) 456--468.
	
	\bibitem {NeSh66}Newman, D.J., Shapiro, H.S.: \emph{Certain Hilbert spaces of
		entire functions.} Bull. Amer. Math. Soc. \textbf{ 72}  (1966) 971--977.
	

	\bibitem {Shap89}Shapiro, H.S.: \emph{An algebraic theorem of E. Fischer and
		the Holomorphic Goursat Problem.} Bull. London Math. Soc. \textbf{21}, (1989) 513--537.
	

\end{thebibliography}
\end{document}